\newtheorem{theorem}{Theorem}[section]
\newtheorem{lemma}[theorem]{Lemma}
\newtheorem{proposition}[theorem]{Proposition}
\theoremstyle{definition}
\newtheorem{definition}[theorem]{Definition}
\newtheorem{remark}[theorem]{Remark}
\newtheorem{example}[theorem]{Example}
\begin{document}

\title{The dynamic behavior of conjugate multipliers on some reflexive Banach spaces of analytic functions}

\author{Zhen Rong}

\address{\hskip-\parindent
Z.R., College of statistics and mathematics, Inner Mongolia University of Finance and Economics,
Hohhot 010000, China.}
\email{rongzhenboshi@sina.com}

\date{June 1, 2023}

\subjclass[2010]{47B37, 47A16, 46A45}
\keywords{Hypercyclic operators, mixing operators, chaotic operators, conjugate multipliers}

\begin{abstract}
Extending previous results of Godefroy and Shapiro we characterize the hypercyclic, mixing and chaotic conjugate multipliers on some reflexive spaces of analytic functions.
\end{abstract}

\maketitle

%%%%%%%%%%%%%%%%%%%%%%%%%%%%%%%%%%%%%%%%%%%%%%%%%%%%%%%%%%%%%%%%%%%%%%%%%%%%%%%%%%%%%%%%%%%%%%%%%%%%%%%%%%%%%%%%%%%%%%%%%
\section{Introduction} \label{S-introduction}
Throughout this article, let $\mathbb{N}$ denote the set of nonnegative integers. Let $\mathbb{K}$ denote the real number field $\mathbb{R}$ or the complex number field $\mathbb{C}$. Let $\mathbb{Q}$ denote the rational number field. If $z\in\mathbb{C}$ and $r>0$ are fixed then define $B(z,r)=\{\lambda\in\mathbb{C}:|\lambda-z|<r\}$. Let $\mathbb{T}=\{z\in\mathbb{C}:|z|=1\}$ and $\mathbb{D}=\{z\in\mathbb{C}:|z|<1\}$.

A continuous linear operator $T$ on a Banach space $X$ is called {\it hypercyclic} if there is an element $x$ in $X$ whose orbit $\{T^{n}x:n\in\mathbb{N}\}$ under $T$ is dense in $X$; {\it topologically transitive} if for any pair $U,V$ of nonempty open subsets of $X$, there exists some nonnegative integer $n$ such that $T^{n}(U)\cap V\neq\emptyset$; {\it mixing} if for any pair $U,V$ of nonempty open subsets of $X$, there exists some nonnegative integer $N$ such that $T^{n}(U)\cap V\neq\emptyset$ for all $n\geqslant N$; and {\it chaotic} if $T$ is topologically transitive and $T$ has a dense set of periodic points.

The historical interest in hypercyclicity is related to the invariant subset problem. The invariant subset problem, which is open to this day, asks whether every continuous linear operator on any infinite dimensional separable Hilbert space possesses an invariant closed subset other than the trivial ones given by $\{0\}$ and the whole space. Counterexamples do exist for continuous linear operators on non-reflexive spaces like $l^{1}$. After a simple observation, a continuous linear operator $T$ on a Banach space $X$ has no nontrival invariant closed subsets if and only if every nonzero vector $x$ is hypercyclic (i.e. the orbit $\{T^{n}x:n\in\mathbb{N}\}$ under $T$ is dense in $X$).

The best known examples of hypercyclic operators are due to Birkhoff \cite{Birkhoff}, MacLane \cite{MacLane} and Rolewicz \cite{Rolewicz}. Each of these papers had a profound influence on the literature on hypercyclicity. Birkhoff's result on the hypercyclicity of the translation operator $T_{a}(f)(z)=f(z+a),a\neq0,$ on the space $H(\mathbb{C})$ of entire functions has led to an extensive study of hypercyclic composition operators (see \cite[pages 110-118]{Grosse-Erdmann-Peris}), while MacLane's result on the hypercyclicity of the differentiation operator $Df=f^{\prime}$ on $H(\mathbb{C})$ initiated the study of hypercyclic differential operators (see \cite[pages 104-110]{Grosse-Erdmann-Peris}).

Recently Godefroy and Shapiro \cite{Godefroy-Shapiro} have studied the dynamic properties of conjugate multipliers on some Hilbert spaces of analytic functions. Godefroy and Shapiro \cite{Godefroy-Shapiro} characterized hypercyclic, mixing and chaotic conjugate multipliers on some Hilbert spaces of analytic functions. It is therefore very natural to try to characterize hypercyclic, mixing and chaotic conjugate multipliers on arbitrary reflexive Banach spaces of analytic functions. In this paper we will characterize the hypercyclic, mixing and chaotic conjugate multipliers on some reflexive spaces of analytic functions, generalizing \cite[Theorem 4.5, Theorem 4.9, Theorem 6.2]{Godefroy-Shapiro}.

\begin{theorem}
Let $\Omega\subseteq\mathbb{C}$ be a nonempty open connected subset. Let $X\neq\{0\}$ be a reflexive Banach space of analytic functions on $\Omega$ such that each point evaluation $k_{\lambda}:X\rightarrow\mathbb{C}(\lambda\in\Omega)$ is continuous on $X$, where $k_{\lambda}(f)=f(\lambda)(f\in X)$. Suppose further that every bounded analytic function $\psi$ on $\Omega$ defines a multiplication operator $M_{\psi}:X\rightarrow X$ with $\|M_{\psi}\|\leqslant\sup\limits_{z\in\Omega}|\psi(z)|$, where $M_{\psi}(f)=\psi f(f\in X)$. Let $\varphi$ be a nonconstant bounded analytic function on $\Omega$ and $M_{\varphi}^{\ast}$ the conjugate of $M_{\varphi}$. Then the following assertions are equivalent:

(1) $M_{\varphi}^{\ast}$ is hypercyclic;

(2) $M_{\varphi}^{\ast}$ is mixing;

(3) $M_{\varphi}^{\ast}$ is chaotic;

(4) $\varphi(\Omega)\cap\mathbb{T}\neq\emptyset$.
\end{theorem}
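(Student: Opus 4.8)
The standard route here is the Godefroy–Shapiro / Kitai-type eigenvalue criterion: identify a rich supply of eigenvectors for $M_\varphi^\ast$ and use them to verify mixing and chaos directly, then close the circle with the easy implications $(2)\Rightarrow(1)$, $(3)\Rightarrow(1)$, and a spectral obstruction for $(1)\Rightarrow(4)$. The key observation is that for each $\lambda\in\Omega$ the point evaluation $k_\lambda\in X^\ast$ is an eigenvector of $M_\varphi^\ast$: for $f\in X$ one computes $\langle M_\varphi^\ast k_\lambda, f\rangle = \langle k_\lambda, \varphi f\rangle = \varphi(\lambda) f(\lambda) = \overline{\varphi(\lambda)}\,\langle k_\lambda,f\rangle$ in the real sense — more precisely $M_\varphi^\ast k_\lambda = \varphi(\lambda) k_\lambda$ when we remember that the conjugate operator on a complex Banach space acts by $\langle T^\ast\xi,x\rangle = \xi(Tx)$, so $k_\lambda$ is an eigenvector with eigenvalue $\varphi(\lambda)$. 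Moreover $k_\lambda\neq 0$ since $X\neq\{0\}$ and $X$ consists of analytic functions on the connected set $\Omega$ (if some nonzero $f\in X$ vanished at every $\lambda$ it would be identically zero).

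\textbf{$(4)\Rightarrow(2)$ and $(4)\Rightarrow(3)$.} Assume $\varphi(\Omega)\cap\mathbb{T}\neq\emptyset$. Since $\varphi$ is nonconstant analytic and $\Omega$ is open and connected, $\varphi(\Omega)$ is open, so it meets both $\mathbb{D}$ and $\mathbb{C}\setminus\overline{\mathbb{D}}$. Put
\[
\Omega_0 = \varphi^{-1}(\mathbb{D}), \qquad \Omega_\infty = \varphi^{-1}(\mathbb{C}\setminus\overline{\mathbb{D}}),
\]
both nonempty open subsets of $\Omega$. The plan is to show
\[
E_0 = \spn\{k_\lambda : \lambda\in\Omega_0\}, \qquad E_\infty = \spn\{k_\lambda : \lambda\in\Omega_\infty\}
\]
are both dense in $X^\ast$; then $E_0$ is a dense set of vectors on which $(M_\varphi^\ast)^n\to 0$ (eigenvalues $\varphi(\lambda)^n\to0$) and $E_\infty$ is a dense set of vectors in the range of arbitrarily high powers with $(M_\varphi^\ast)^{-n}\to0$ along them, which gives mixing by the Godefroy–Shapiro mixing criterion; density of $\spn\{k_\lambda:|\varphi(\lambda)|=1\text{ but }\varphi(\lambda)\text{ a root of unity}\}$ — using that $\varphi(\Omega)$ is open and hence contains a nonempty open subarc's worth of points with $\varphi(\lambda)$ a root of unity, wait, actually one uses the set $\varphi^{-1}(\{$roots of unity$\})$ near the boundary — gives a dense set of periodic points, yielding chaos. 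Density of these spans is where reflexivity enters: a functional in $X^{\ast\ast}=X$ (by reflexivity) annihilating all $k_\lambda$, $\lambda$ in some open $\Omega'\subseteq\Omega$, is an $f\in X$ with $f\equiv 0$ on $\Omega'$, hence $f\equiv0$ by analyticity and connectedness of $\Omega$.

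\textbf{$(2)\Rightarrow(1)$ and $(3)\Rightarrow(1)$} are immediate from the definitions (mixing implies topologically transitive, and for separable $X$ — which follows since $X$ embeds in $H(\Omega)$ via the continuous point evaluations — transitivity implies hypercyclicity; chaos includes transitivity by definition).

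\textbf{$(1)\Rightarrow(4)$.} Suppose $\varphi(\Omega)\cap\mathbb{T}=\emptyset$. By connectedness $\varphi(\Omega)$ is an open connected subset of $\mathbb{C}\setminus\mathbb{T}$, so either $\varphi(\Omega)\subseteq\mathbb{D}$ or $\varphi(\Omega)\subseteq\mathbb{C}\setminus\overline{\mathbb{D}}$. In the first case $\|M_\varphi\|\le\sup_\Omega|\varphi|<1$... not quite: the sup could equal $1$. But $\varphi(\Omega)\subseteq\mathbb{D}$ open connected with closure inside $\overline{\mathbb{D}}$; if $\sup_\Omega|\varphi|=1$ then $M_\varphi^\ast$ has spectral radius $\le1$ and one shows no hypercyclic operator can have spectral radius $\le1$ unless... hmm — actually the clean statement is: if $\varphi(\Omega)\subseteq\overline{\mathbb{D}}$ then $\|M_\varphi\|\le1$ so $\|M_\varphi^\ast\|\le1$, forcing all orbits of $M_\varphi^\ast$ to be bounded, so $M_\varphi^\ast$ cannot be hypercyclic (a dense orbit must be unbounded in an infinite-dimensional space — indeed $X$ is infinite-dimensional since $\varphi$ nonconstant forces $\dim X=\infty$). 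In the second case $\varphi(\Omega)\subseteq\mathbb{C}\setminus\overline{\mathbb{D}}$, so $\varphi$ is bounded away from $0$ and $1/\varphi$ is a bounded analytic function with $\sup|1/\varphi|<1$; then $M_{1/\varphi}=M_\varphi^{-1}$ has norm $<1$, so $M_\varphi^\ast$ is invertible with $\|(M_\varphi^\ast)^{-1}\|<1$, hence every orbit $\{(M_\varphi^\ast)^n x\}$ satisfies $\|(M_\varphi^\ast)^n x\|\ge \|x\|$ and in fact $\|(M_\varphi^\ast)^n x\|\to\infty$ while staying far from $0$ — more simply, $0$ is not in the closure of any orbit of a nonzero vector, so $M_\varphi^\ast$ is not hypercyclic. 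Either way $(1)$ fails, completing the cycle $(4)\Rightarrow(2)\Rightarrow(1)\Rightarrow(4)$ and $(4)\Rightarrow(3)\Rightarrow(1)$.

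\textbf{Main obstacle.} The technical heart is verifying the density of $\spn\{k_\lambda:\lambda\in\Omega'\}$ in $X^\ast$ for open $\Omega'\subseteq\Omega$, which is exactly where reflexivity of $X$ is used (to identify $X^{\ast\ast}$ with $X$ and invoke analyticity), and the careful bookkeeping for the chaotic case — producing enough $\lambda$ with $\varphi(\lambda)$ a root of unity — which relies on the open mapping theorem for the nonconstant analytic $\varphi$ to guarantee $\varphi(\Omega)$ contains a nonempty open set straddling $\mathbb{T}$, hence infinitely many roots of unity, each pulling back to points whose evaluation functionals are $T$-periodic eigenvectors.
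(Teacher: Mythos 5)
Your proposal is correct and follows essentially the same route as the paper: the eigenvector identity $M_{\varphi}^{\ast}k_{\lambda}=\varphi(\lambda)k_{\lambda}$, density of spans of point evaluations obtained from reflexivity plus the identity theorem, the Godefroy--Shapiro eigenvalue criterion for mixing and chaos, and the norm/invertibility obstruction for $(1)\Rightarrow(4)$. Two small details to tighten: Birkhoff's transitivity theorem requires separability of $X^{\ast}$ (the space on which $M_{\varphi}^{\ast}$ acts), which follows from your density lemma rather than from the embedding of $X$ into $H(\Omega)$; and in the chaos step the set $\varphi^{-1}(\{\text{roots of unity}\})$ is not open, so you must check that it has a limit point in $\Omega$ --- e.g.\ by trapping infinitely many preimages of roots of unity in a compact disc $\overline{B(z_{0},r)}\subseteq\Omega$, as the paper does.
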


Godefroy and Shapiro \cite[Theorem 4.5, Theorem 4.9, Theorem 6.2]{Godefroy-Shapiro} proved Theorem 1.1 in the case of Hilbert spaces of analytic functions, thus Theorem 1.1 generalizes \cite[Theorem 4.5, Theorem 4.9, Theorem 6.2]{Godefroy-Shapiro}.

This paper is organized as follows. In Section~\ref{S-analytic} we characterize the hypercyclic, mixing and chaotic conjugate multipliers on some reflexive spaces of analytic functions, generalizing \cite[Theorem 4.5, Theorem 4.9, Theorem 6.2]{Godefroy-Shapiro}. Furthermore, we exhibit several hypercyclic, mixing and chaotic conjugate multipliers on $H^{p}$ spaces for $p>1$. These examples show that our generalizations are more effective.

\noindent{\it Acknowledgments.}
Z.~R. was supported by National Natural Science Foundation of China (Grant No.12261063).

\section{The dynamic behavior of conjugate multipliers on some reflexive Banach spaces of analytic functions} \label{S-analytic}
In this section we characterize hypercyclic, mixing and chaotic conjugate multipliers on some reflexive Banach spaces of analytic functions, generalizing \cite[Theorem 4.5, Theorem 4.9, Theorem 6.2]{Godefroy-Shapiro}.

Recall the notion of annihilator introduced in \cite[page 163]{Taylor-Lay}.

\begin{definition}
Let $X$ be a normed linear space. If $A\subseteq X$, the annihilator $A^{\bot}$ of $A$ is the set
$$A^{\bot}=\{x^{\prime}\in X^{\ast}:x^{\prime}(x)=0\text{ for all }x\in A\},$$
where $X^{\ast}$ is the set of continuous linear functionals on $X$.

If $F\subseteq X^{\ast}$, the annihilator $F^{\bot}$ of $F$ is the set
$$F^{\bot}=\{x\in X:x^{\prime}(x)=0\text{ for all }x^{\prime}\in F\}.$$

\end{definition}

The following technical results will help us characterize hypercyclic, mixing and chaotic conjugate multipliers on some reflexive Banach spaces of analytic functions.

The following proposition is well known (see \cite[page 164]{Taylor-Lay}).

\begin{proposition}
A normed linear space $X$ is a reflexive Banach space if and only if every norm-closed linear subspace in $X^{\ast}$ is $\sigma(X^{\ast},X)$-closed, where $\sigma(X^{\ast},X)$ is the weak$^{\ast}$ topology on $X^{\ast}$.
\end{proposition}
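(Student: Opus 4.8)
The plan is to prove the two implications separately, both resting on the standard dictionary relating the canonical embedding $J\colon X\to X^{\ast\ast}$, $(Jx)(x^{\ast})=x^{\ast}(x)$, to the weak$^{\ast}$ topology $\sigma(X^{\ast},X)$ on $X^{\ast}$. (I will treat $X$ as a Banach space, as in the cited source; in the ``if'' direction one can recover completeness by a separate standard argument, but that point is inessential.)

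\emph{The ``only if'' direction.} Assume $X$ is a reflexive Banach space, so $J$ is surjective. First I would record that then $\sigma(X^{\ast},X)$ coincides with the weak topology $\sigma(X^{\ast},X^{\ast\ast})$ of the Banach space $X^{\ast}$: each is the initial topology for a family of linear functionals on $X^{\ast}$, and the two families agree because every $x^{\ast\ast}\in X^{\ast\ast}$ has the form $Jx$, whence $x^{\ast\ast}(x^{\ast})=x^{\ast}(x)$. Next, given a norm-closed linear subspace $M\subseteq X^{\ast}$, I would note that $M$ is convex and invoke Mazur's theorem (a consequence of Hahn--Banach separation): a convex norm-closed subset of a normed space is weakly closed. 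Hence $M$ is $\sigma(X^{\ast},X^{\ast\ast})$-closed, and therefore $\sigma(X^{\ast},X)$-closed by the identification just made.

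\emph{The ``if'' direction.} I would argue the contrapositive: if $X$ is not reflexive, produce a norm-closed subspace of $X^{\ast}$ that is not weak$^{\ast}$-closed. Since $J(X)\subsetneq X^{\ast\ast}$, choose $\Lambda\in X^{\ast\ast}\setminus J(X)$ (note $\Lambda\neq 0$) and set $M=\ker\Lambda$, a norm-closed hyperplane in $X^{\ast}$. The two facts I would use are: (i) in a topological vector space, a hyperplane $\ker f$ is closed if and only if $f$ is continuous; and (ii) the $\sigma(X^{\ast},X)$-continuous linear functionals on $X^{\ast}$ are precisely the maps $x^{\ast}\mapsto x^{\ast}(x)$, $x\in X$, i.e.\ exactly the elements of $J(X)$. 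Since $\Lambda\notin J(X)$, it is not weak$^{\ast}$-continuous, so $M$ is a norm-closed subspace of $X^{\ast}$ that fails to be weak$^{\ast}$-closed, completing the contrapositive.

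I do not expect a serious obstacle: the statement is classical. The only point needing care is in the ``if'' direction, where one should resist constructing anything elaborate and instead rely solely on the defining failure of surjectivity of $J$ together with the characterization (ii) of weak$^{\ast}$-continuous functionals; that characterization is precisely the mechanism by which a non-reflexive $X$ ``detects'' a norm-closed but not weak$^{\ast}$-closed subspace of $X^{\ast}$. An alternative route would deduce everything from Kakutani's theorem (reflexivity $\Leftrightarrow$ weak compactness of the closed unit ball) together with the bipolar theorem, but the hyperplane argument above is shorter and self-contained.
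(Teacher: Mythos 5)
The paper offers no proof of this proposition: it is stated as well known with a citation to Taylor--Lay, so there is nothing internal to compare your argument against. Your proof is correct and is the standard one --- Mazur's theorem plus the identification $\sigma(X^{\ast},X)=\sigma(X^{\ast},X^{\ast\ast})$ for the ``only if'' direction, and the hyperplane $\ker\Lambda$ for $\Lambda\in X^{\ast\ast}\setminus J(X)$ together with the characterization of the $\sigma(X^{\ast},X)$-dual as $J(X)$ for the ``if'' direction; your remark that completeness of $X$ is recoverable (from $J$ being a surjective isometry onto the complete space $X^{\ast\ast}$) correctly disposes of the only subtlety in the ``normed linear space'' phrasing.
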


We need the following proposition (see \cite[pages 163-164]{Taylor-Lay}).

\begin{proposition}
Let $X$ be a normed linear space. If $F$ is a nonempty subset of $X^{\ast}$, then $F^{\bot\bot}$ is the $\sigma(X^{\ast},X)$-closed linear subspace generated by $F$, where $F^{\bot\bot}=(F^{\bot})^{\bot}$.
\end{proposition}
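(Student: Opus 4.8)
The plan is to establish the two inclusions separately, writing $M$ for the $\sigma(X^{\ast},X)$-closed linear subspace generated by $F$, that is, the smallest weak$^{\ast}$-closed linear subspace of $X^{\ast}$ containing $F$. The goal is then to show $F^{\bot\bot}=M$.

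First I would verify the inclusion $M\subseteq F^{\bot\bot}$ by checking that $F^{\bot\bot}$ is itself a weak$^{\ast}$-closed linear subspace containing $F$; since $M$ is by definition the smallest such set, this suffices. That $F\subseteq F^{\bot\bot}$ is immediate from the definitions: if $x^{\prime}\in F$, then $x^{\prime}(x)=0$ for every $x\in F^{\bot}$, so $x^{\prime}\in(F^{\bot})^{\bot}=F^{\bot\bot}$. To see that $F^{\bot\bot}$ is a weak$^{\ast}$-closed linear subspace, observe that for any subset $G\subseteq X$ one has $G^{\bot}=\bigcap_{x\in G}\ker\widehat{x}$, where $\widehat{x}:X^{\ast}\to\mathbb{K}$ is the evaluation functional $\widehat{x}(x^{\prime})=x^{\prime}(x)$. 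Each $\widehat{x}$ is linear and $\sigma(X^{\ast},X)$-continuous, so its kernel is a weak$^{\ast}$-closed linear subspace, and an intersection of weak$^{\ast}$-closed linear subspaces is again one. Applying this with $G=F^{\bot}$ gives that $F^{\bot\bot}=(F^{\bot})^{\bot}$ is a weak$^{\ast}$-closed linear subspace, as claimed.

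The reverse inclusion $F^{\bot\bot}\subseteq M$ is the substantive part, and it is where I expect the main obstacle. The strategy is a separation argument in the locally convex space $(X^{\ast},\sigma(X^{\ast},X))$. Suppose, for contradiction, that some $x^{\prime}_{0}\in F^{\bot\bot}$ lies outside $M$. Since $M$ is weak$^{\ast}$-closed and convex and $\{x^{\prime}_{0}\}$ is a single point disjoint from $M$, the Hahn--Banach separation theorem supplies a $\sigma(X^{\ast},X)$-continuous linear functional $\Lambda$ on $X^{\ast}$ strictly separating $x^{\prime}_{0}$ from $M$; because $M$ is a linear subspace, $\Lambda$ may be taken to vanish identically on $M$ while $\Lambda(x^{\prime}_{0})\neq 0$.

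The crucial ingredient is the standard description of the topological dual of the weak$^{\ast}$ topology: every $\sigma(X^{\ast},X)$-continuous linear functional on $X^{\ast}$ is of the form $\widehat{x}$ for some $x\in X$. Thus $\Lambda=\widehat{x}$ for a suitable $x\in X$. Since $F\subseteq M$ and $\Lambda$ vanishes on $M$, we obtain $x^{\prime}(x)=\Lambda(x^{\prime})=0$ for all $x^{\prime}\in F$, that is $x\in F^{\bot}$. On the other hand, $x^{\prime}_{0}\in F^{\bot\bot}=(F^{\bot})^{\bot}$ forces $x^{\prime}_{0}(x)=0$, i.e.\ $\Lambda(x^{\prime}_{0})=0$, contradicting $\Lambda(x^{\prime}_{0})\neq 0$. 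Hence no such $x^{\prime}_{0}$ exists and $F^{\bot\bot}\subseteq M$. Combining the two inclusions yields $F^{\bot\bot}=M$, as desired. The only delicate point is the identification of the weak$^{\ast}$-continuous dual of $X^{\ast}$ with the evaluations $\widehat{x}$, $x\in X$; once that fact is in hand, the separation step is routine.
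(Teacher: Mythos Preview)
Your argument is correct and is the standard textbook proof of this fact. Note, however, that the paper does not supply its own proof of this proposition: it is stated with a citation to Taylor--Lay, \emph{Introduction to Functional Analysis}, pages 163--164, and used as a black box. Your separation argument via the Hahn--Banach theorem together with the identification of $(X^{\ast},\sigma(X^{\ast},X))^{\ast}$ with the evaluation functionals $\widehat{x}$, $x\in X$, is precisely the approach one finds in standard references, so there is nothing to compare against in the paper itself.
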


The following proposition is well known in complex analysis (see \cite[page 78]{Conway}).

\begin{proposition}
Let $\Omega\subseteq\mathbb{C}$ be a nonempty open connected subset. Let $f:\Omega\rightarrow\mathbb{C}$ be an analytic function on $\Omega$. Then the following are equivalent statements:
\begin{enumerate}
  \item $f\equiv0$;
  \item $\{z\in\Omega:f(z)=0\}$ has a limit point in $\Omega$.
\end{enumerate}
\end{proposition}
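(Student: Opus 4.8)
The plan is to prove the two implications separately. The implication $(1)\Rightarrow(2)$ is immediate: if $f\equiv 0$ then the zero set is all of $\Omega$, and since $\Omega$ is nonempty and open it certainly has a limit point lying in $\Omega$. All of the substance lies in $(2)\Rightarrow(1)$, which I would establish by a connectedness argument built on a purely local computation.

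For $(2)\Rightarrow(1)$, let $a\in\Omega$ be a limit point of the zero set $Z=\{z\in\Omega:f(z)=0\}$. First I would show that $f$ vanishes identically on a neighborhood of $a$. Choosing $r>0$ with $B(a,r)\subseteq\Omega$ and expanding $f$ as a convergent power series $f(z)=\sum_{n\geqslant 0}c_{n}(z-a)^{n}$ on $B(a,r)$, I would argue by contradiction: suppose not every $c_{n}$ vanishes, and let $m$ be the least index with $c_{m}\neq 0$. Then $f(z)=(z-a)^{m}g(z)$ with $g$ analytic on $B(a,r)$ and $g(a)=c_{m}\neq 0$; by continuity $g$ is nonvanishing on some smaller disk about $a$, so the only zero of $f$ in that disk is $a$ itself, contradicting that $a$ is a limit point of $Z$. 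Hence every $c_{n}=0$, so $f\equiv 0$ on $B(a,r)$; in particular $f^{(n)}(a)=0$ for all $n\geqslant 0$.

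Next I would define $A=\{w\in\Omega:f^{(n)}(w)=0\text{ for every }n\geqslant 0\}$ and show it is a nonempty subset of $\Omega$ that is both open and closed. It is nonempty because $a\in A$ by the previous step. It is closed in $\Omega$ since $A=\bigcap_{n\geqslant 0}(f^{(n)})^{-1}(\{0\})$ is an intersection of preimages of the closed singleton $\{0\}$ under the continuous maps $f^{(n)}$. It is open because if $w\in A$ then the Taylor coefficients of $f$ at $w$ are all $f^{(n)}(w)/n!=0$, whence $f\equiv 0$ on some disk $B(w,\rho)\subseteq\Omega$; on that disk all derivatives of $f$ vanish as well, giving $B(w,\rho)\subseteq A$. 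Since $\Omega$ is connected, a nonempty subset that is simultaneously open and closed must equal $\Omega$, so $A=\Omega$. In particular $f(w)=f^{(0)}(w)=0$ for every $w\in\Omega$, that is, $f\equiv 0$.

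The main obstacle is the local step: extracting from the mere existence of a limit point of zeros the vanishing of every Taylor coefficient at $a$. This rests on the factorization $f(z)=(z-a)^{m}g(z)$ together with the continuity, and hence local nonvanishing, of $g$ at a point where it is nonzero. Once this local rigidity is secured, the passage to the global conclusion follows formally from the connectedness of $\Omega$, with no further analytic input required.
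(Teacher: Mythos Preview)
Your proof is correct and is essentially the classical argument for the identity theorem (power-series factorization at a limit point of zeros, followed by a clopen connectedness argument via the set where all derivatives vanish). Note, however, that the paper does not actually prove this proposition: it simply records it as a well-known fact and cites Conway's \emph{Functions of One Complex Variable}, where precisely this argument appears; so you have supplied more detail than the paper itself.
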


We need the following Godefroy-Shapiro criterion (see \cite[pages 69-70]{Grosse-Erdmann-Peris}).

\begin{proposition}
Let $T$ be a continuous linear operator on a separable Banach space $X$. Suppose that the subspaces
$$X_{0}=span\{x\in X:Tx=\lambda x\text{ for some }\lambda\in\mathbb{K}\text{ with }|\lambda|<1\},$$
$$Y_{0}=span\{x\in X:Tx=\lambda x\text{ for some }\lambda\in\mathbb{K}\text{ with }|\lambda|>1\}$$
are dense in $X$. Then $T$ is mixing, and in particular hypercyclic.

If, moreover, $X$ is a complex space and also the subspace
$$Z_{0}=span\{x\in X:Tx=e^{\alpha\pi i}x\text{ for some }\alpha\in\mathbb{Q}\}$$
is dense in $X$, then $T$ is chaotic.
\end{proposition}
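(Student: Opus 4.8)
The statement is the Godefroy--Shapiro criterion, and the natural route is to verify the hypotheses of the Hypercyclicity Criterion in its mixing-strength form by building an explicit right inverse on the eigenvectors generating $Y_{0}$, and then to obtain chaos by checking that the vectors generating $Z_{0}$ are periodic. For the mixing conclusion I would use $X_{0}$ and $Y_{0}$ themselves as the two dense sets. On $Y_{0}$ I would define a linear map $S$ by setting $Sy=\lambda^{-1}y$ for each eigenvector $y$ with $Ty=\lambda y$ and $|\lambda|>1$, and extending by linearity; this is well defined because $S$ acts as the scalar $\lambda^{-1}$ on each eigenspace and eigenvectors attached to distinct eigenvalues are linearly independent.

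With $S$ in hand, the three limiting relations underlying the criterion hold along the \emph{full} sequence $n\to\infty$. For $x\in X_{0}$, written as a finite combination $\sum_{j}c_{j}x_{j}$ with $Tx_{j}=\lambda_{j}x_{j}$ and $|\lambda_{j}|<1$, one has $T^{n}x=\sum_{j}c_{j}\lambda_{j}^{\,n}x_{j}\to0$. For $y\in Y_{0}$, written as $\sum_{j}d_{j}y_{j}$ with $Ty_{j}=\mu_{j}y_{j}$ and $|\mu_{j}|>1$, one has $S^{n}y=\sum_{j}d_{j}\mu_{j}^{-n}y_{j}\to0$, while $T^{n}S^{n}y=\sum_{j}d_{j}y_{j}=y$ for every $n$. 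Given nonempty open sets $U,V$, I would fix $u_{0}\in U$, $v_{0}\in V$ and $\varepsilon>0$ with the corresponding $\varepsilon$-balls contained in $U$ and $V$, approximate $u_{0},v_{0}$ to within $\varepsilon/2$ by $u\in X_{0}$ and $v\in Y_{0}$, and put $w_{n}=u+S^{n}v$. Since $w_{n}\to u$ and $T^{n}w_{n}=T^{n}u+v\to v$, both $w_{n}\in U$ and $T^{n}w_{n}\in V$ hold for all sufficiently large $n$, giving $T^{n}(U)\cap V\neq\emptyset$ for all large $n$. That is precisely mixing, and hence hypercyclicity. The key point is that $T^{n}S^{n}v=v$ is an exact identity valid for every $n$, which is what upgrades the conclusion from hypercyclicity (a subsequence statement) to mixing.

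For the last implication, mixing already yields topological transitivity, so it remains to exhibit a dense set of periodic points, and $Z_{0}$ serves. If $Tz=e^{\alpha\pi i}z$ with $\alpha=p/q\in\mathbb{Q}$, then $e^{\alpha\pi i}$ is a root of unity and $T^{2q}z=e^{2\pi ip}z=z$, so $z$ is periodic. A general element of $Z_{0}$ is a finite combination $\sum_{j}e_{j}z_{j}$ of such eigenvectors whose individual periods divide $2q_{1},\dots,2q_{k}$; taking $N$ to be the least common multiple of these integers gives $T^{N}\big(\sum_{j}e_{j}z_{j}\big)=\sum_{j}e_{j}z_{j}$, so the combination is periodic. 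Hence $Z_{0}$ is contained in the set of periodic points, and since $Z_{0}$ is dense by hypothesis, $T$ has a dense set of periodic points and is therefore chaotic.

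The argument has no deep obstacle; the points that require care are structural. First, one must check that $S$ is genuinely well defined and linear on $Y_{0}$, which rests on the linear independence of eigenvectors for distinct eigenvalues. Second, one must verify that $T^{n}S^{n}v=v$ is an identity and not merely an approximation, since this is exactly what distinguishes mixing from bare topological transitivity. The complex hypothesis enters only in the chaotic part, where the unimodular eigenvalues $e^{\alpha\pi i}$ must be available as honest (generally non-real) eigenvalues of $T$.
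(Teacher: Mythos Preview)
The paper does not give its own proof of this proposition; it is quoted as a known result with a reference to \cite[pages 69--70]{Grosse-Erdmann-Peris}. Your argument is correct and is essentially the standard proof found there: one verifies Kitai's three conditions (with the right inverse $S$ defined eigenvector by eigenvector) along the full sequence $(n)$ to get mixing, and then observes that $Z_{0}$ consists of periodic points to obtain chaos. The only point worth tightening is the well-definedness of $S$: rather than appealing to linear independence of individual eigenvectors, note that $Y_{0}$ decomposes as a direct sum of the eigenspaces $\ker(T-\mu I)$ for $|\mu|>1$, and $S$ is the linear map acting as $\mu^{-1}$ on each summand; this sidesteps any ambiguity when several chosen eigenvectors share the same eigenvalue.
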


The following is the major technique we need.

\begin{lemma}
Let $\Omega\subseteq\mathbb{C}$ be a nonempty open connected subset. Let $X\neq\{0\}$ be a reflexive Banach space of analytic functions on $\Omega$ such that each point evaluation $k_{\lambda}:X\rightarrow\mathbb{C}(\lambda\in\Omega)$ is continuous on $X$, where $k_{\lambda}(f)=f(\lambda)(f\in X)$. Let $\Lambda\subseteq\Omega$ be a set with a limit point in $\Omega$. Then the set $span\{k_{\lambda}:\lambda\in\Lambda\}$ is dense in $X^{\ast}$.
\end{lemma}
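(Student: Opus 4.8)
The plan is to show that $\spn\{k_\lambda:\lambda\in\Lambda\}$ is weak$^*$-dense in $X^\ast$ and then upgrade this to norm-density using reflexivity. Let $F=\spn\{k_\lambda:\lambda\in\Lambda\}\subseteq X^\ast$. By Proposition~2.4, the bidual annihilator $F^{\bot\bot}$ is exactly the $\sigma(X^\ast,X)$-closed linear subspace generated by $F$. Since $X$ is reflexive, Proposition~2.2 tells us that the norm-closure of $F$ coincides with its weak$^*$-closure, namely $F^{\bot\bot}$. So it suffices to prove $F^{\bot\bot}=X^\ast$, which by the duality is equivalent to $F^\bot=\{0\}$ inside $X$ (here I identify the annihilator of $F$ taken in $X^{\ast\ast}$ with a subset of $X$ via reflexivity).

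Next I would unwind what $F^\bot=\{0\}$ means concretely. An element $f\in X$ lies in $F^\bot$ precisely when $x'(f)=0$ for every $x'\in F$, i.e.\ when $k_\lambda(f)=0$ for all $\lambda\in\Lambda$. But $k_\lambda(f)=f(\lambda)$, so $f\in F^\bot$ iff $f$ vanishes on all of $\Lambda$. Now $\Lambda$ has a limit point in $\Omega$ and $f$ is analytic on the open connected set $\Omega$, so Proposition~2.5 (the identity theorem) forces $f\equiv 0$. Hence $F^\bot=\{0\}$, therefore $F^{\bot\bot}=X^\ast$, and combining with the previous paragraph the norm-closure of $\spn\{k_\lambda:\lambda\in\Lambda\}$ is all of $X^\ast$.

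One point that needs a little care, and which I expect to be the only genuine subtlety, is the bookkeeping around the two possible meanings of ``annihilator'' when $F\subseteq X^\ast$: the set $F^\bot\subseteq X$ of the Definition versus the annihilator of $F$ computed in the full bidual $X^{\ast\ast}$. These agree only because $X$ is reflexive, and one must invoke reflexivity (Proposition~2.2) at exactly the right place to know that the norm-closed subspace $\overline{F}$ is weak$^*$-closed, hence equal to $F^{\bot\bot}$, hence — since $F^\bot=\{0\}$ — equal to $(\{0\})^\bot=X^\ast$. Aside from this, the argument is a direct chain: express density as an annihilator condition, translate the annihilator condition into a vanishing condition on analytic functions, and kill it with the identity theorem. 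No estimates or approximation arguments are needed; the reflexivity hypothesis does all the work of turning weak$^*$-density into norm-density.
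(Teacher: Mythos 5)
Your argument is correct and is essentially the paper's own proof: show $F^{\bot}=\{0\}$ via the identity theorem, then use reflexivity (norm-closed $\Rightarrow$ weak$^{*}$-closed) together with $F^{\bot\bot}$ being the weak$^{*}$-closed span to conclude $\overline{F}=F^{\bot\bot}=\{0\}^{\bot}=X^{\ast}$. The only slips are bibliographic: the bipolar-type statement is the paper's Proposition~2.3 (not 2.4) and the identity theorem is Proposition~2.4 (not 2.5).
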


\begin{proof}
First we will show that $(span\{k_{\lambda}:\lambda\in\Lambda\})^{\bot}=\{0\}$. Let $f\in(span\{k_{\lambda}:\lambda\in\Lambda\})^{\bot}$. We will show that $f\equiv0$. Since $f\in(span\{k_{\lambda}:\lambda\in\Lambda\})^{\bot}$, we have $k_{\lambda}(f)=0$ for all $\lambda\in\Lambda$. Notice that $k_{\lambda}(f)=f(\lambda)$. Then $f(\lambda)=0$ for all $\lambda\in\Lambda$. This implies that $\Lambda\subseteq\{z\in\Omega:f(z)=0\}$. Since $\Lambda$ has a limit point in $\Omega$, $\{z\in\Omega:f(z)=0\}$ has a limit point in $\Omega$. By Proposition 2.4 we have $f\equiv0$.

Next we will show that $\overline{span\{k_{\lambda}:\lambda\in\Lambda\}}=X^{\ast}$. Since $$(\overline{span\{k_{\lambda}:\lambda\in\Lambda\}})^{\bot}=(span\{k_{\lambda}:\lambda\in\Lambda\})^{\bot}$$
and $(span\{k_{\lambda}:\lambda\in\Lambda\})^{\bot}=\{0\}$, $(\overline{span\{k_{\lambda}:\lambda\in\Lambda\}})^{\bot}=\{0\}$. Hence $(\overline{span\{k_{\lambda}:\lambda\in\Lambda\}})^{\bot\bot}=\{0\}^{\bot}=X^{\ast}$. Since $X$ is reflexive and $\overline{span\{k_{\lambda}:\lambda\in\Lambda\}}$ is norm-closed, by Proposition 2.2 we have $\overline{span\{k_{\lambda}:\lambda\in\Lambda\}}$ is $\sigma(X^{\ast},X)$-closed. Finally by Proposition 2.3 we have $$(\overline{span\{k_{\lambda}:\lambda\in\Lambda\}})^{\bot\bot}=\overline{span\{k_{\lambda}:\lambda\in\Lambda\}}.$$
Therefore $\overline{span\{k_{\lambda}:\lambda\in\Lambda\}}=X^{\ast}$.
\end{proof}

\begin{flushright}
  $\Box$
\end{flushright}

\begin{remark}
Let $\Omega\subseteq\mathbb{C}$ be a nonempty open connected subset. Let $X\neq\{0\}$ be a reflexive Banach space of analytic functions on $\Omega$ such that each point evaluation $k_{\lambda}:X\rightarrow\mathbb{C}(\lambda\in\Omega)$ is continuous on $X$. Then by Lemma 2.6 we have $X^{\ast}$ is separable.
\end{remark}

Next we prove Theorem 1.1.

$\mathbf{Proof~of~Theorem~1.1.}$

(4)$\Rightarrow$(2) First we will show that $M_{\varphi}^{\ast}(k_{\lambda})=\varphi(\lambda)k_{\lambda}$ for all $\lambda\in\Omega$. Let $\lambda\in\Omega$ and $f\in X$. Notice that

\begin{align*}
M_{\varphi}^{\ast}(k_{\lambda})(f)=&k_{\lambda}(M_{\varphi}(f))\\
=&k_{\lambda}(\varphi f)\\
=&(\varphi f)(\lambda)\\
=&\varphi(\lambda)f(\lambda)\\
=&(\varphi(\lambda)k_{\lambda})(f).
\end{align*}
Therefore $M_{\varphi}^{\ast}(k_{\lambda})=\varphi(\lambda)k_{\lambda}$ for all $\lambda\in\Omega$.

Next we will show that $\{\lambda\in\Omega:|\varphi(\lambda)|<1\}$ is nonempty and has a limit point in $\Omega$. Since $\varphi(\Omega)\cap\mathbb{T}\neq\emptyset$, we may choose $z_{0}\in\Omega$ with $|\varphi(z_{0})|=1$. Since nonconstant analytic functions are open mappings (see \cite[page 99]{Conway}), $\varphi(\Omega)$ is open. Hence there exists $\delta>0$ such that $B(\varphi(z_{0}),\delta)\subseteq\varphi(\Omega)$. Since $|\varphi(z_{0})|=1$, we may choose $\lambda_{0}\in B(\varphi(z_{0}),\delta)$ with $|\lambda_{0}|<1$. Since $B(\varphi(z_{0}),\delta)\subseteq\varphi(\Omega)$, we have $\lambda_{0}\in\varphi(\Omega)$. Therefore there exists $w_{0}\in\Omega$ such that $\lambda_{0}=\varphi(w_{0})$. This implies that $w_{0}\in\{\lambda\in\Omega:|\varphi(\lambda)|<1\}$ and $\{\lambda\in\Omega:|\varphi(\lambda)|<1\}$ is nonempty. Since $\varphi$ is continuous, $\{\lambda\in\Omega:|\varphi(\lambda)|<1\}$ is a nonempty open subset. Hence $\{\lambda\in\Omega:|\varphi(\lambda)|<1\}$ has a limit point in $\Omega$.

Similarly we can show that $\{\lambda\in\Omega:|\varphi(\lambda)|>1\}$ is nonempty and has a limit point in $\Omega$.

Finally we will show that $M_{\varphi}^{\ast}$ is mixing. Let
$$A=span\{x^{\ast}\in X^{\ast}:M_{\varphi}^{\ast}x^{\ast}=\lambda x^{\ast}\text{ for some }\lambda\in\mathbb{C}\text{ with }|\lambda|<1\},$$
$$B=span\{x^{\ast}\in X^{\ast}:M_{\varphi}^{\ast}x^{\ast}=\lambda x^{\ast}\text{ for some }\lambda\in\mathbb{C}\text{ with }|\lambda|>1\}.$$
Since $M_{\varphi}^{\ast}(k_{\lambda})=\varphi(\lambda)k_{\lambda}$ for all $\lambda\in\Omega$, $span\{k_{\lambda}:\lambda\in\Omega\text{ and }|\varphi(\lambda)|<1\}\subseteq A$ and $span\{k_{\lambda}:\lambda\in\Omega\text{ and }|\varphi(\lambda)|>1\}\subseteq B$. Since $\{\lambda\in\Omega:|\varphi(\lambda)|<1\}$ and $\{\lambda\in\Omega:|\varphi(\lambda)|>1\}$ both have a limit point in $\Omega$, by Lemma 2.6 we have $span\{k_{\lambda}:\lambda\in\Omega\text{ and }|\varphi(\lambda)|<1\}$ and $span\{k_{\lambda}:\lambda\in\Omega\text{ and }|\varphi(\lambda)|>1\}$ are both dense in $X^{\ast}$. Hence $A$ and $B$ are both dense in $X^{\ast}$. By Proposition 2.5, we have $M_{\varphi}^{\ast}$ is mixing.

(2)$\Rightarrow$(1) Assume that $M_{\varphi}^{\ast}$ is mixing. Then $M_{\varphi}^{\ast}$ is topologically transitive. Since a continuous linear operator on a separable Banach space is topologically transitive if and only if it is hypercyclic (see \cite[page 10]{Grosse-Erdmann-Peris}), by Remark 2.7 we have $M_{\varphi}^{\ast}$ is hypercyclic.

(1)$\Rightarrow$(4) Let us suppose that $\varphi(\Omega)$ does not intersect the unit circle. Since $\varphi(\Omega)$ is open and connected, it must lie entirely inside or entirely outside $\mathbb{D}$. If $\varphi(\Omega)\subseteq\mathbb{D}$ then
$$\|M_{\varphi}^{\ast}\|=\|M_{\varphi}\|\leqslant\sup\limits_{z\in\Omega}|\varphi(z)|\leqslant1,$$
and hence $M_{\varphi}^{\ast}$ cannot be hypercyclic. If $\varphi(\Omega)\subseteq\mathbb{C}\setminus\overline{\mathbb{D}}$ then $\psi=\frac{1}{\varphi}$ is a bounded analytic function on $\Omega$ with $\psi(\Omega)\subseteq\mathbb{D}$, which implies that $M_{\psi}^{\ast}$ cannot be hypercyclic. But $M_{\varphi}$ is the inverse of $M_{\psi}$ and therefore $M_{\varphi}^{\ast}$ is the inverse of $M_{\psi}^{\ast}$. Hence $M_{\varphi}^{\ast}$ cannot be hypercyclic.

(4)$\Rightarrow$(3) First we will show that $\{\lambda\in\Omega:\varphi(\lambda)\text{ is a root of unity}\}$ has a limit point in $\Omega$. Since $\varphi(\Omega)\cap\mathbb{T}\neq\emptyset$, we may choose $z_{0}\in\Omega$ with $|\varphi(z_{0})|=1$. Since $\Omega$ is open, there is an $r>0$ such that $\overline{B(z_{0},r)}\subseteq\Omega$. Since nonconstant analytic functions are open mappings (see \cite[page 99]{Conway}), $\varphi(B(z_{0},r))$ is open. Since $\varphi(z_{0})\in\varphi(B(z_{0},r))$, there is an $\delta>0$ such that $B(\varphi(z_{0}),\delta)\subseteq\varphi(B(z_{0},r))$. Since $|\varphi(z_{0})|=1$, $B(\varphi(z_{0}),\delta)$ contains infinitely many roots of unity. This implies that infinitely many preimages of roots of unity lie in the compact subset $\overline{B(z_{0},r)}$ of $\Omega$. Therefore $\{\lambda\in\Omega:\varphi(\lambda)\text{ is a root of unity}\}$ has a limit point in $\Omega$.

Next we will show that $M_{\varphi}^{\ast}$ is chaotic. Let
$$C=span\{x^{\ast}\in X^{\ast}:M_{\varphi}^{\ast}x^{\ast}=e^{\alpha\pi i}x^{\ast}\text{ for some }\alpha\in\mathbb{Q}\}.$$
Since $M_{\varphi}^{\ast}(k_{\lambda})=\varphi(\lambda)k_{\lambda}$ for all $\lambda\in\Omega$, $span\{k_{\lambda}:\lambda\in\Omega\text{ and } \varphi(\lambda)\text{ is a root of unity}\}\subseteq C$. Since $\{\lambda\in\Omega:\varphi(\lambda)\text{ is a root of unity}\}$ has a limit point in $\Omega$, by Lemma 2.6 we have $span\{k_{\lambda}:\lambda\in\Omega\text{ and } \varphi(\lambda)\text{ is a root of unity}\}$ is dense in $X^{\ast}$. Hence $C$ is dense in $X^{\ast}$. By (4)$\Rightarrow$(2), we have $M_{\varphi}^{\ast}$ is mixing. By Proposition 2.5, we have $M_{\varphi}^{\ast}$ is chaotic.

(3)$\Rightarrow$(1) Assume that $M_{\varphi}^{\ast}$ is chaotic. Then $M_{\varphi}^{\ast}$ is topologically transitive. Since a continuous linear operator on a separable Banach space is topologically transitive if and only if it is hypercyclic (see \cite[page 10]{Grosse-Erdmann-Peris}), by Remark 2.7 we have $M_{\varphi}^{\ast}$ is hypercyclic.

\begin{flushright}
  $\Box$
\end{flushright}

Godefroy and Shapiro \cite[Theorem 4.5, Theorem 4.9, Theorem 6.2]{Godefroy-Shapiro} proved Theorem 1.1 in the case of Hilbert spaces of analytic functions, thus Theorem 1.1 generalizes \cite[Theorem 4.5, Theorem 4.9, Theorem 6.2]{Godefroy-Shapiro}.

\begin{example}
For $1\leqslant p<+\infty$, let $H^{p}$ denote the space of all analytic functions on $\mathbb{D}$ for which $\sup\limits_{0\leqslant r<1}(\frac{1}{2\pi}\int_{0}^{2\pi}|f(re^{i\theta})|^{p}d\theta)^{\frac{1}{p}}<+\infty$. For any $f\in H^{p}$, let
$$\|f\|_{p}=\sup\limits_{0\leqslant r<1}(\frac{1}{2\pi}\int_{0}^{2\pi}|f(re^{i\theta})|^{p}d\theta)^{\frac{1}{p}}.$$
Then $(H^{p},\|\cdot\|_{p})$ is a Banach space.

In this example we will characterize hypercyclic, mixing and chaotic conjugate multipliers on $H^{p}$ for $1<p<+\infty$, generalizing \cite[page 253]{Godefroy-Shapiro}.

First we will show that each point evaluation $k_{\lambda}:H^{p}\rightarrow\mathbb{C}(\lambda\in\mathbb{D})$ is continuous on $H^{p}$ for $1\leqslant p<+\infty$, where $k_{\lambda}(f)=f(\lambda)(f\in H^{p})$. Let $\lambda\in\mathbb{D}$, $\{f_{n}\}_{n=1}^{\infty}$ be a sequence in $H^{p}$, $f\in H^{p}$ and $\lim\limits_{n\rightarrow\infty}\|f_{n}-f\|_{p}=0$. We will show that $\lim\limits_{n\rightarrow\infty}f_{n}(\lambda)=f(\lambda)$. Since $\lambda\in\mathbb{D}$, we may choose $r,R\in(0,1)$ with $|\lambda|<r<R<1$. Notice that
\begin{align*}
\|f_{n}-f\|_{p}\geqslant&(\frac{1}{2\pi}\int_{0}^{2\pi}|f_{n}(Re^{i\theta})-f(Re^{i\theta})|^{p}d\theta)^{\frac{1}{p}}\\
\geqslant&\frac{1}{(2\pi)^{\frac{1}{p}}}(\int_{0}^{2\pi}|f_{n}(Re^{i\theta})-f(Re^{i\theta})|^{p}d\theta)^{\frac{1}{p}}\\
\geqslant&\frac{1}{(2\pi)^{\frac{1}{p}}}\frac{1}{(2\pi)^{\frac{1}{q}}}\int_{0}^{2\pi}|f_{n}(Re^{i\theta})-f(Re^{i\theta})|d\theta(\text{ where }\frac{1}{p}+\frac{1}{q}=1)\\
=&\frac{1}{2\pi}\int_{0}^{2\pi}|f_{n}(Re^{i\theta})-f(Re^{i\theta})|d\theta.
\end{align*}
By Cauchy's integral formula, we have
\begin{align*}
(f_{n}-f)(\lambda)=&\frac{1}{2\pi i}\int_{\gamma}\frac{(f_{n}-f)(\omega)}{\omega-\lambda}d\omega(\text{ where }\gamma(t)=Re^{i\theta},0\leqslant\theta\leqslant2\pi)\\
=&\frac{1}{2\pi i}\int_{0}^{2\pi}\frac{f_{n}(Re^{i\theta})-f(Re^{i\theta})}{Re^{i\theta}-\lambda}d(Re^{i\theta})\\
=&\frac{R}{2\pi}\int_{0}^{2\pi}\frac{f_{n}(Re^{i\theta})-f(Re^{i\theta})}{Re^{i\theta}-\lambda}e^{i\theta}d\theta.
\end{align*}
Therefore
\begin{align*}
&\frac{1}{2\pi}\int_{0}^{2\pi}|f_{n}(Re^{i\theta})-f(Re^{i\theta})|d\theta\\
&=\frac{1}{2\pi}\int_{0}^{2\pi}|\frac{f_{n}(Re^{i\theta})-f(Re^{i\theta})}{Re^{i\theta}-\lambda}e^{i\theta}|\cdot |Re^{i\theta}-\lambda|d\theta\\
&\geqslant\frac{R-r}{2\pi}\int_{0}^{2\pi}|\frac{f_{n}(Re^{i\theta})-f(Re^{i\theta})}{Re^{i\theta}-\lambda}e^{i\theta}|d\theta\\
&\geqslant\frac{R-r}{2\pi}|\int_{0}^{2\pi}\frac{f_{n}(Re^{i\theta})-f(Re^{i\theta})}{Re^{i\theta}-\lambda}e^{i\theta}d\theta|\\
&=\frac{R-r}{2\pi}|\frac{2\pi}{R}(f_{n}(\lambda)-f(\lambda))|\\
&=\frac{R-r}{R}|f_{n}(\lambda)-f(\lambda)|.
\end{align*}
Hence $\|f_{n}-f\|_{p}\geqslant\frac{R-r}{R}|f_{n}(\lambda)-f(\lambda)|$. Since $\lim\limits_{n\rightarrow\infty}\|f_{n}-f\|_{p}=0$, $\lim\limits_{n\rightarrow\infty}f_{n}(\lambda)=f(\lambda)$.

Next we will show that every bounded analytic function $\psi$ on $\mathbb{D}$ defines a multiplication operator $M_{\psi}:H^{p}\rightarrow H^{p}$ with $\|M_{\psi}\|\leqslant\sup\limits_{z\in\mathbb{D}}|\psi(z)|$ for $1\leqslant p<+\infty$. Let $\psi$ be a bounded analytic function on $\mathbb{D}$. Then $\psi$ defines a multiplication operator $M_{\psi}$ on $H^{p}$. Furthermore, for any $f\in H^{p}$ we have
\begin{align*}
\|M_{\psi}(f)\|_{p}&=\sup\limits_{0\leqslant r<1}(\frac{1}{2\pi}\int_{0}^{2\pi}|\psi(re^{i\theta})f(re^{i\theta})|^{p}d\theta)^{\frac{1}{p}}\\
&\leqslant\sup\limits_{z\in\mathbb{D}}|\psi(z)|\cdot\sup\limits_{0\leqslant r<1}(\frac{1}{2\pi}\int_{0}^{2\pi}|f(re^{i\theta})|^{p}d\theta)^{\frac{1}{p}}\\
&=\sup\limits_{z\in\mathbb{D}}|\psi(z)|\cdot\|f\|_{p}.
\end{align*}
Therefore $\|M_{\psi}\|\leqslant\sup\limits_{z\in\mathbb{D}}|\psi(z)|$.

Finally we will show that $(H^{p},\|\cdot\|_{p})$ is reflexive for $1<p<+\infty$. If $f\in H^{p}$, then $\lim\limits_{r\rightarrow1^{-}}f(re^{i\theta})$ exists for almost all values of $\theta$ (see \cite[page 17]{Duren}), thus defining a function which we denote by $f(e^{i\theta})$. If $1<p<+\infty$, each $\phi\in(H^{p})^{\ast}$ is representable in the following form
$$\phi(f)=\frac{1}{2\pi}\int_{0}^{2\pi}f(e^{i\theta})g(e^{-i\theta})d\theta(f\in H^{p})$$
by a unique function $g\in H^{q}$, where $\frac{1}{p}+\frac{1}{q}=1$. Furthermore, the linear operator $T:(H^{p})^{\ast}\rightarrow H^{q}$ defined by $T(\phi)=g(\phi\in(H^{p})^{\ast})$ is a topological isomorphism (see \cite[pages 112-113]{Duren}). Let $x^{\prime\prime}\in(H^{p})^{\ast\ast}$. We will show that there exists a $g\in H^{p}$ such that $x^{\prime\prime}(x^{\prime})=x^{\prime}(g)$ for all $x^{\prime}\in(H^{p})^{\ast}$. Let $y^{\prime\prime}=x^{\prime\prime}\circ T^{-1}$. Then $y^{\prime\prime}\in(H^{q})^{\ast}$ and $y^{\prime\prime}(Tx^{\prime})=x^{\prime\prime}(x^{\prime})$ for $x^{\prime}\in(H^{p})^{\ast}$. Hence there exists a unique function $g\in H^{p}$ such that
$$y^{\prime\prime}(f)=\frac{1}{2\pi}\int_{0}^{2\pi}f(e^{i\theta})g(e^{-i\theta})d\theta(f\in H^{q}).$$
Therefore, if $x^{\prime}\in(H^{p})^{\ast}$,
\begin{align*}
x^{\prime\prime}(x^{\prime})=&y^{\prime\prime}(Tx^{\prime})\\
=&\frac{1}{2\pi}\int_{0}^{2\pi}(Tx^{\prime})(e^{i\theta})g(e^{-i\theta})d\theta\\
=&-\frac{1}{2\pi}\int_{0}^{-2\pi}g(e^{i\theta})(Tx^{\prime})(e^{-i\theta})d\theta\\
=&\frac{1}{2\pi}\int_{-2\pi}^{0}g(e^{i\theta})(Tx^{\prime})(e^{-i\theta})d\theta\\
=&\frac{1}{2\pi}\int_{0}^{2\pi}g(e^{i\theta})(Tx^{\prime})(e^{-i\theta})d\theta\\
=&x^{\prime}(g).
\end{align*}
This implies that $x^{\prime\prime}(x^{\prime})=x^{\prime}(g)$ for all $x^{\prime}\in(H^{p})^{\ast}$ and $(H^{p},\|\cdot\|_{p})$ is reflexive.

By Theorem 1.1, for any nonconstant bounded analytic function $\varphi$ on $\mathbb{D}$ and $1<p<+\infty$, $M_{\varphi}^{\ast}:(H^{p})^{\ast}\rightarrow(H^{p})^{\ast}$ is hypercyclic if and only if $M_{\varphi}^{\ast}:(H^{p})^{\ast}\rightarrow(H^{p})^{\ast}$ is mixing if and only if $M_{\varphi}^{\ast}:(H^{p})^{\ast}\rightarrow(H^{p})^{\ast}$ is chaotic if and only if $\varphi(\mathbb{D})\cap\mathbb{T}\neq\emptyset$.
\end{example}

Godefroy and Shapiro \cite[page 253]{Godefroy-Shapiro} proved the above result in the case of $p=2$, thus the above example generalizes \cite[page 253]{Godefroy-Shapiro}.

%%%%%%%%%%%%%%%%%%%%%%%%%%%%%%%%%%%%%%%%%%%%%%%%%%%%%%%%%%%%%%%%%%%%%%%%%%%%%%%%%%%%%%%%%%%%%%%%%%%%%%%%%%%%%%%%%%%%%%%%%%%

\end{document}